\newtheorem{thm}{Theorem}[section]
\newtheorem{lem}[thm]{Lemma}
\newtheorem{cor}[thm]{Corollary}
\newtheorem{prop}[thm]{Proposition}
\theoremstyle{definition}
\newtheorem{rem}[thm]{Remark}
\numberwithin{equation}{thm}
\def\N{\mathscr N}
\def\CC{\mathscr C}
\def\ggg{\mathfrak{g}}
\def\sss{\mathfrak{s}}
\def\lll{\mathfrak{l}}
\def\Aut {{\rm Aut\,}}
\def\Lie{{\rm Lie}}
\begin{document}

\title[Nilpotent commuting varieties of the Witt algebra]{Nilpotent commuting varieties of the Witt algebra}

\author{Yu-Feng Yao and Hao Chang}

\address{Department of Mathematics, Shanghai Maritime University,
 Shanghai, 201306, China.}\email{yfyao@shmtu.edu.cn}

\address{Department of Mathematics, East China Normal University,
 Shanghai, 200241, China.} \email{hchang@ecnu.cn}

\subjclass[2010]{17B05, 17B08, 17B50}

\keywords{Witt algebra, Borel subalgebra, nilpotent commuting variety, irreducible component, dimension}

\thanks{This work is supported by the National Natural Science Foundation of China (Grant Nos. 11201293 and 11271130),
the Innovation Program of Shanghai Municipal Education Commission (Grant Nos. 13YZ077 and 12ZZ038), and the Fund of ECNU and SMU for Overseas Studies.}

\begin{abstract}
Let $\ggg$ be the $p$-dimensional Witt algebra over an algebraically closed field $k$ of characteristic $p>3$.
Let $\N=\{x\in\ggg\mid x^{[p]}=0\}$ be the nilpotent variety of $\ggg$, and $\CC(\N):=\{(x,y)\in \N\times\N\mid [x,y]=0\}$ the nilpotent commuting variety of $\ggg$. As an analogue of Premet's result in the case of classical Lie algebras [A. Premet, {\em Nilpotent commuting varieties of reductive Lie algebras}. Invent. Math., 154, 653-683, 2003.], we show that the variety $\CC(\N)$ is reducible and equidimensional. Irreducible components of $\CC(\N)$ and their dimension are precisely given. Furthermore, the nilpotent commuting varieties of Borel subalgebras are also determined.
\end{abstract}

\maketitle

\section{Introduction}
Let $k$ be an algebraically closed field of characteristic $p>0$. For a restricted Lie algebra $\ggg$ over $k$, let $\N=\{x\in\ggg\mid x^{[p]^s}=0\,\,\text{for}\,\,s\gg 0\}$ be the nilpotent variety of $\ggg$. The nilpotent commuting variety $\CC(\N)$ of $\ggg$ is defined as the collection of all $2$-tuples of pairwise commuting elements in $\N$. It is a closed subvariety of $\N\times\N$. For $\ggg=\Lie(G)$ where $G$ is a connected reductive algebraic group and $p$ is good for $G$, Premet \cite{Pr-2} showed that the nilpotent commuting variety $\CC(\N)$ is equidimensional, and the irreducible components are in correspondence with the distinguished nilpotent $G$-orbits in $\N$. The nilpotent commuting variety plays an important role for the study of support varieties of modules over reduced enveloping algebras of $\ggg$ and cohomology theory of the second Frobenius kernel $G_2$ of $G$. Premet's theorem was also proved in characteristic zero. Quite recently, Goodwin and R\"ohrle \cite{GR} gave an analogue of Premet's theorem on the nilpotent commuting varieties of Borel subalgebras of $\ggg$ in the case of characteristic zero. In this paper, we initiate the study of nilpotent commuting varieties of Lie algebras of Cartan type over $k$.

Let $\ggg=W_1$ be the Witt algebra which was found by E. Witt as the first example of non-classical simple Lie algebra in 1930s. As is known to all, $\ggg$ is a restricted Lie algebra, and has a natural $\mathbb{Z}$-grading $\ggg=\sum_{i=-1}^{p-2}\ggg_{[i]}$. Associated with this grading, one has
a filtration $(\ggg_i)_{i\geq -1}$ with $\ggg_i=\sum_{j\geq i}\ggg_{[j]}$ for $i\geq -1$. Let ${\N}=\{x\in\ggg\mid x^{[p]}=0\}$ be the nilpotent
variety of $\ggg$, which is a closed subvariety in $\ggg$. Set $\CC(\N)=\{(x,y)\in\N\times\N\mid [x,y]=0\}$, the nilpotent commuting variety of $\ggg$. It is showed that the variety $\CC(\N)$ is reducible and equidimensional. There are $\frac{p-1}{2}$ irreducible components of the same
dimension $p$ (see Theorem \ref{thm-1}). Consequently, the variety $\CC(\N)$ is not normal (see Corollary \ref{cor-1}). Furthermore, let
${\mathscr{B}}^+=\ggg_0$ be the standard Borel subalgebra of $\ggg$, and $\N({\mathscr{B}}^+)=\{x\in {\mathscr{B}}^+\mid x^{[p]}=0\}=\ggg_1$ the nilpotent
variety of ${\mathscr{B}}^+$. Set $\CC\big({\mathscr{N}}({\mathscr{B}}^+)\big)=\{(x,y)\in {\mathscr{N}}({\mathscr{B}}^+)\times {\mathscr{N}}({\mathscr{B}}^+)\mid [x,y]=0\}$, the nilpotent commuting variety of the Borel subalgebra ${\mathscr{B}}^+$. The variety  $\CC\big({\mathscr{N}}({\mathscr{B}}^+)\big)$ is showed to be reducible and equidimensional. There are $\frac{p-3}{2}$ irreducible components of the same dimension $p$ (see Theorem \ref{thm-2}). Moreover, the variety $\CC\big({\mathscr{N}}({\mathscr{B}}^+)\big)$ is not normal (see Corollary \ref{cor-2}).  As a motivation for further study, it should be mentioned that the nilpotent commuting variety  $\CC\big({\mathscr{N}}({\mathscr{B}}^+)\big)$ of the Borel subalgebra ${\mathscr{B}}^+$ plays a very important role in the cohomology theory of the second Frobenius kernel $G_2$ of $G$, where $G$ is the automorphism group of $\ggg$. To be more precise, it was proved in \cite{SFB} that $\CC\big({\mathscr{N}}({\mathscr{B}}^+)\big)$ is homeomorphic to the spectrum of maximal ideals of the Yoneda algebra $\bigoplus_{i\geq 0} H^{2i}(G_2,k)$ of the second Frobenius kernel $G_2$ of $G$ whenever $p$ is sufficiently large.

\section{Preliminaries}
Throughout this paper, we assume that the ground field $k$ is algebraically closed, and of characteristic $p>3$. Let
${\mathfrak{A}}=k[X]/(X^p)$ be the truncated polynomial algebra of one indeterminate, where $(X^p)$ denotes the ideal of $k[X]$
generated by $X^p$. For brevity, we also denote by $X$ the coset of $X$ in $\mathfrak{A}$. There is a canonical basis $\{1,X,\cdots, X^{p-1}\}$ in
$\mathfrak{A}$. Let $D$ be the linear operator on $\mathfrak{A}$ subject to the rule $DX^i=iX^{i-1}$ for $0\leq i\leq p-1$. Denote by $W_1$ the derivation
algebra of $\mathfrak{A}$, namely the Witt algebra. In the following, we always assume $\ggg=W_1$ unless otherwise stated. By \cite[\S\,4.2]{SF},
$\ggg=\text{span}_k\{X^iD\mid 0\leq i\leq p-1\}$. There is a natural $\mathbb{Z}$-grading on $\ggg$, i.e., $\ggg=\sum_{i=-1}^{p-2}\ggg_{[i]}$,
where $\ggg_{[i]}=kX^{i+1}D,\,-1\leq i\leq p-2$. Associated with this grading, one has the following natural filtration:
$$\ggg=\ggg_{-1}\supset\ggg_0\supset\cdots\supset\ggg_{p-2}\supset 0,$$
where $$\ggg_i=\sum\limits_{j\geq i}\ggg_{[j]}, \,-1\leq i\leq p-2.$$
This filtration is preserved under the action of the automorphism group $G$ of $\ggg$ (cf. \cite{Ch, Re, Wi}). Furthermore, $\ggg$ is a restricted
Lie algebra with the $[p]$-mapping defined as the $p$-th power as usual derivations. Precisely speaking,
$$(X^iD)^{[p]}=
\begin{cases}
0, &\text{if}\,\,i\neq 1, \cr XD, &\text{if}\,\, i=1.
\end{cases}$$
We need the following result on the automorphism group of $\ggg$.

\begin{lem}(cf. \cite{Ch, LN}, see also \cite[Theorem 12.8]{Re})\label{lem-1}
Let $\ggg=W_1$ be the Witt algebra over $k$ and $G=\Aut(\ggg)$. Then the following statements hold.
\begin{itemize}
\item[(i)] $G$ is a connected algebraic group of dimension $p-1$.
\item[(ii)] $\Lie(G)=\ggg_0$.
\end{itemize}
\end{lem}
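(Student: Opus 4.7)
The plan is to identify $G = \Aut(\ggg)$ with the associative automorphism group $\Aut(\mathfrak{A})$ of the truncated polynomial algebra and then read off everything from an explicit coordinate description. The classical fact (treated in \cite{Ch, LN, Re}) is that the map $\Aut(\mathfrak{A}) \to \Aut(\ggg)$, sending an algebra automorphism $\sigma$ to the Lie automorphism $\delta \mapsto \sigma \circ \delta \circ \sigma^{-1}$, is a bijection. The crucial content is surjectivity: it rests on the structural fact that $\ggg_0$ is the unique subalgebra of $\ggg$ of codimension one, so every Lie automorphism preserves the standard filtration $\ggg = \ggg_{-1} \supset \ggg_0 \supset \cdots \supset \ggg_{p-2}$ and thereby lifts to an automorphism of $\mathfrak{A}$; functoriality shows the bijection is a morphism of algebraic groups in both directions.

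Next I would parametrize $\Aut(\mathfrak{A})$ explicitly. An algebra automorphism is determined by its value on the generator, $\sigma(X) = a_1 X + a_2 X^2 + \cdots + a_{p-1} X^{p-1}$: the constant term is forced to vanish because $\sigma(X)^p = 0$ collapses to the $p$-th power of the constant term (all higher terms are annihilated by $X^p = 0$), and bijectivity is equivalent to $a_1 \in k^{\times}$. Hence $\Aut(\mathfrak{A}) \cong k^{\times} \times \mathbb{A}^{p-2}$ as a variety, which is irreducible, so $G$ is connected of dimension $p - 1$, proving (i).

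For (ii), I would compute the tangent space at the identity using this same parametrization. The $p - 1$ coordinate curves $\sigma_\epsilon(X) = X + \epsilon X^j$ for $1 \leq j \leq p - 1$ span $T_{\id} G$, and the curve indexed by $j$ corresponds to the derivation $\delta_j \in \mathrm{Der}(\mathfrak{A})$ satisfying $\delta_j(X) = X^j$, namely $\delta_j = X^j D \in \ggg_{[j-1]}$. These elements span $\bigoplus_{i=0}^{p-2} \ggg_{[i]} = \ggg_0$, yielding $\Lie(G) = \ggg_0$.

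The main conceptual obstacle is the identification $\Aut(\ggg) \cong \Aut(\mathfrak{A})$ in the first step, which relies on the nontrivial structural characterization of $\ggg_0$ as the unique proper subalgebra of codimension one in $W_1$; once this is in hand, everything else reduces to elementary coordinate bookkeeping. One should note in passing that the full automorphism functor of $\mathfrak{A}$ admits the additional infinitesimal direction $X \mapsto X + \epsilon$, corresponding to $D \in \ggg_{[-1]}$, but this does not extend to a one-parameter family (since $(X + t)^p = t^p$ is nonzero for $t \neq 0$) and so it lies only in the non-reduced part of the automorphism scheme, contributing nothing to $\Lie(G)$.
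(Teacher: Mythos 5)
Your proposal is correct, and it is essentially the classical argument underlying the sources the paper itself cites for this lemma (Chang, Lin--Nakano, Ree); the paper gives no proof of its own beyond that citation. The one non-elementary input --- surjectivity of $\Aut(\mathfrak{A})\to\Aut(\ggg)$, resting on the fact that $\ggg_0$ is the unique subalgebra of codimension one for $p>3$ (precisely what fails at $p=3$, where $W_1\cong\sss\lll_2$) --- is correctly isolated and deferred to those same references, so your treatment is no less complete than the paper's. Two small remarks: you do not actually need the inverse of $\Aut(\mathfrak{A})\to\Aut(\ggg)$ to be a morphism, since a bijective morphism from the irreducible $(p-1)$-dimensional variety $k^{\times}\times\mathbb{A}^{p-2}$ already forces $G$ to be connected of dimension $p-1$, and your tangent-space computation takes place directly in $G\subseteq GL(\ggg)$, where the curves $\epsilon\mapsto\bigl(\delta\mapsto\sigma_\epsilon\delta\sigma_\epsilon^{-1}\bigr)$ have tangent vectors $\ad(X^jD)$, $1\le j\le p-1$, which span $\Lie(G)$ by the dimension count, giving $\Lie(G)=\ad(\ggg_0)\cong\ggg_0$. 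Your closing observation about the non-reduced automorphism scheme (the dual-number direction $X\mapsto X+\epsilon$ corresponding to $D$) is also accurate and correctly explains why $\Lie(G)$ is $\ggg_0$ rather than all of $\mathrm{Der}(\mathfrak{A})=\ggg$.
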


\begin{rem}
Lemma \ref{lem-1} is not valid for $p=3$. In fact, when $p=3$, the Witt algebra $W_1\cong\sss\lll_2$, and $\Aut(\sss\lll_2)$
has dimension $3$.
\end{rem}

Based on \cite[Proposition 3.3 and Proposition 3.4]{YS}, we get the following useful result by a direct computation.

\begin{lem}\label{lem-2}
Let $\ggg=W_1$ be the Witt algebra. For $x\in\ggg$, let ${\mathfrak{z}}_{\ggg}(x)=\{y\in\ggg\mid [x,y]=0\}$ be the
centralizer of $x$ in $\ggg$. Then
$${\mathfrak{z}}_{\ggg}(x)=
\begin{cases}
kx, &\text{if}\,\,x\in G\cdot D, \cr kx\oplus\ggg_{p-1-i}, &\text{if}\,\,  x\in\ggg_i\setminus\ggg_{i+1},\,1\leq i<\frac{p-1}{2},\cr
\ggg_{p-1-i}, &\text{if}\,\, x\in\ggg_i\setminus\ggg_{i+1},\,i\geq\frac{p-1}{2}.
\end{cases}$$
\end{lem}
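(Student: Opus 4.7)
The plan is to combine two elementary inclusions with a reduction to the homogeneous representative, using the natural $\mathbb{Z}$-grading and the induced filtration on $\ggg$. First, the bracket formula $[X^{a+1}D,X^{b+1}D]=(b-a)X^{a+b+1}D$ together with $X^p=0$ in $\mathfrak{A}$ show that $X^{a+1}D$ and $X^{b+1}D$ commute whenever $a+b+1\geq p$. Consequently, for $x\in\ggg_i$ the subspace $\ggg_{p-1-i}$ centralizes $x$, and combined with $kx\subseteq\mathfrak{z}_{\ggg}(x)$ this yields the lower bound $kx+\ggg_{p-1-i}\subseteq\mathfrak{z}_{\ggg}(x)$.

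For the regular case $x\in G\cdot D$, the calculation $[D,X^{j+1}D]=(j+1)X^jD$ is nonzero for every $0\leq j\leq p-2$, giving $\mathfrak{z}_{\ggg}(D)=kD$; then $G$-equivariance of centralizers (for $G=\Aut(\ggg)$) yields $\mathfrak{z}_{\ggg}(g\cdot D)=k(g\cdot D)$ for every $g\in G$. For the homogeneous element $e_i:=X^{i+1}D$ with $1\leq i\leq p-2$, the formula $\ad(e_i)(e_j)=(j-i)e_{i+j}$ (with the convention $e_k:=0$ when $k>p-2$) shows that $e_j\in\mathfrak{z}_{\ggg}(e_i)$ precisely when $j=i$ or $j\geq p-1-i$; so $\mathfrak{z}_{\ggg}(e_i)=ke_i+\ggg_{p-1-i}$ has dimension $i+1$ when $i<(p-1)/2$ and dimension $i$ when $i\geq(p-1)/2$.

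For a general $x\in\ggg_i\setminus\ggg_{i+1}$, I extend the homogeneous computation by a symbol argument. Write the symbol of $x$ as $\bar x=c_ie_i\in\ggg_{[i]}$ with $c_i\neq 0$. If $y\in\mathfrak{z}_{\ggg}(x)$ has its lowest-grade component $\bar y\in\ggg_{[j]}$ nonzero, then the grade-$(i+j)$ part of $[x,y]$ equals $[\bar x,\bar y]$, which must vanish; hence $\bar y\in\mathfrak{z}_{\ggg}(\bar x)=\mathfrak{z}_{\ggg}(e_i)$. Passing to associated graded modules gives an injection $\mathrm{gr}\,\mathfrak{z}_{\ggg}(x)\hookrightarrow\mathfrak{z}_{\ggg}(e_i)$, so $\dim\mathfrak{z}_{\ggg}(x)\leq\dim\mathfrak{z}_{\ggg}(e_i)$. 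Matching this against the lower bound of the first paragraph forces $\mathfrak{z}_{\ggg}(x)=kx+\ggg_{p-1-i}$. If $i<(p-1)/2$ then $\ggg_{p-1-i}\subseteq\ggg_{i+1}$ while $x\notin\ggg_{i+1}$, so $kx\cap\ggg_{p-1-i}=0$ and the sum is direct; if $i\geq(p-1)/2$ then $x\in\ggg_i\subseteq\ggg_{p-1-i}$ and the sum collapses to $\ggg_{p-1-i}$.

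The main step requiring care is the symbol argument: one must verify cleanly that the associated graded of $\mathfrak{z}_{\ggg}(x)$ embeds into the centralizer of its symbol $\bar x$, which follows from the strict compatibility of the filtration with the Lie bracket. Everything else reduces to straightforward bookkeeping with the basis $\{e_{-1},e_0,\ldots,e_{p-2}\}$ and the explicit multiplication rule for the $X^{i+1}D$.
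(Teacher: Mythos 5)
Your proposal is correct. Note that the paper does not spell out a proof of this lemma at all: it simply invokes Propositions 3.3 and 3.4 of [YS] (which describe centralizers/nilpotent orbits in $W_1$) and says the statement follows ``by a direct computation.'' Your argument supplies that computation in a self-contained way, and the route is the natural one: the lower bound $kx+\ggg_{p-1-i}\subseteq{\mathfrak{z}}_{\ggg}(x)$ from $X^p=0$, the explicit centralizers of $D$ and of the homogeneous elements $e_i=X^{i+1}D$, and then the passage from a general $x\in\ggg_i\setminus\ggg_{i+1}$ to its symbol via the filtered-to-graded dimension comparison, which pins down ${\mathfrak{z}}_{\ggg}(x)$ exactly by matching the upper bound $\dim{\mathfrak{z}}_{\ggg}(x)\leq\dim{\mathfrak{z}}_{\ggg}(e_i)$ against the lower bound. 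The key step you flag (injectivity of the symbol map on the associated graded of the centralizer, using that the lowest-degree component of $[x,y]$ is $[\bar x,\bar y]$ and that ${\mathfrak{z}}_{\ggg}(e_i)$ is a graded subspace) is stated and used correctly, and the final bookkeeping distinguishing $i<\frac{p-1}{2}$ (direct sum, since $\ggg_{p-1-i}\subseteq\ggg_{i+1}$ while $x\notin\ggg_{i+1}$) from $i\geq\frac{p-1}{2}$ (where $kx\subseteq\ggg_{p-1-i}$) matches the three cases of the lemma. In short, your proof is a valid replacement for the citation, arguably more transparent than deferring to [YS].
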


\begin{rem}
For $x\in\ggg_1$, let ${\mathfrak{z}}_{\ggg_1}(x)=\{y\in\ggg_1\mid [x,y]=0\}$ be the centralizer of $x$ in $\ggg_1$, then
${\mathfrak{z}}_{\ggg_1}(x)={\mathfrak{z}}_{\ggg}(x)$.
\end{rem}

\section{Nilpotent commuting variety of the Witt algebra}

Keep in mind that $\ggg=W_1$ is the Witt algebra over $k$. Set $\N=\{x\in\ggg\mid x^{[p]}=0\}$, which is
a closed subvariety of $\ggg$. Then $\N$ is just the set of all nilpotent elements in $\ggg$. In the
literature, $\N$ is usually called the nilpotent cone or nilpotent variety of $\ggg$. The variety $\N$ was extensively
studied by Premet in \cite{Pr-1}. The following result is due to Premet.

\begin{lem}(cf. \cite[Theorem 2 and Lemma 4]{Pr-1} or \cite[Lemma 3.1]{YS})\label{lem-3}
Keep notations as above, then the following statements hold.
\begin{itemize}
\item[(i)] The orbit $G\cdot D$ is open and dense in $\N$. Moreover, it coincides with $(\ggg\setminus\ggg_{0})\cap \N$.
\item[(ii)] We have decomposition $\N=G\cdot D\cup\ggg_1$.
\item[(iii)] $\dim\N=p-1$.
\end{itemize}
\end{lem}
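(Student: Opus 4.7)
My plan is to establish (ii) first by a direct analysis of the $[p]$-operator, then derive the dimension and openness in (i) and (iii) via the orbit--stabilizer theorem using Lemmas \ref{lem-1} and \ref{lem-2}, leaving density as the last and most delicate step.

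For (ii), I identify $\ggg$ with the derivation algebra of $\mathfrak{A}$, so every $x \in \ggg$ equals $uD$ for a unique $u \in \mathfrak{A}$. For $x = \sum_{i=1}^{p-1} a_i X^i D \in \ggg_0$, the matrix of $x$ in the monomial basis $\{1, X, \ldots, X^{p-1}\}$ of $\mathfrak{A}$ is lower triangular with diagonal $(0, a_1, 2a_1, \ldots, (p-1)a_1)$ (contributions from $X^i D$ with $i \geq 2$ lie strictly below the diagonal). Since the $[p]$-mapping agrees with the matrix $p$-th power, the diagonal of $x^{[p]}$ equals $a_1^{p-1}(0, a_1, 2a_1, \ldots, (p-1)a_1)$ by Fermat's little theorem, and vanishes iff $a_1 = 0$; combined with the obvious inclusion $\ggg_1 \subseteq \N$ this gives $\N \cap \ggg_0 = \ggg_1$. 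For the complement, any $y \in \ggg \setminus \ggg_0$ has the form $uD$ with $u \in \mathfrak{A}^\times$ (i.e.\ $u(0) \neq 0$), and a direct calculation shows $\phi \in G$ acts by $\phi \cdot D = (1/\phi'(X))\,D$. Hence $uD \in G\cdot D$ iff $1/u$ admits an antiderivative in $\mathfrak{A}$, equivalently iff the coefficient $[X^{p-1}](1/u)$ vanishes. I still need this residue condition to be equivalent to $(uD)^{[p]} = 0$: the inclusion $G\cdot D \subseteq \N$ is automatic from the $G$-equivariance of the $[p]$-mapping together with $D^{[p]} = 0$, while the converse requires expanding the iterated derivation $(uD)^p$ as an operator on $\mathfrak{A}$ and identifying the obstruction to its vanishing with the residue of $1/u$.

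Openness in (i) is then immediate since $\ggg_0 \subset \ggg$ is closed. For dimensions, Lemma \ref{lem-2} gives $\mathfrak{z}_{\ggg}(D) = kD$, and since $D \notin \ggg_0 = \Lie(G)$ by Lemma \ref{lem-1}, the stabilizer $G_D$ has trivial Lie algebra and $\dim G\cdot D = \dim G = p - 1$. As $\N \subsetneq \ggg \cong \mathbb{A}^p$ (for instance $XD \notin \N$), $\dim \N \leq p - 1$, forcing $\dim \N = p - 1$ and proving (iii). For density, $\overline{G\cdot D}$ is irreducible of dimension $p - 1$, so the identity $\overline{G\cdot D} = \N$ reduces to $\ggg_1 \subseteq \overline{G\cdot D}$; one clean route is to clear denominators in the residue condition to realize $\N$ as the zero locus of a single polynomial $P \in k[u_0, \ldots, u_{p-1}]$ (whose restriction to $u_0 = 0$ is a nonzero scalar multiple of $u_1^{p-1}$, matching $\N \cap \ggg_0 = \ggg_1$), and then to verify that $P$ is irreducible.

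The main obstacle is the converse implication in (ii): showing $(uD)^{[p]} = 0$ forces $[X^{p-1}](1/u) = 0$ requires a hands-on expansion of the $p$-fold composition $(uD)^p$ on $\mathfrak{A}$. A secondary obstacle is verifying the irreducibility of the defining polynomial $P$ for general $p$, or alternatively constructing explicit specialization families $\phi_t \cdot D \to v$ in $\overline{G\cdot D}$ covering every $v \in \ggg_1$; either requires more delicate analysis than the rest of the argument.
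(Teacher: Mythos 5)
The paper gives no proof of Lemma \ref{lem-3} at all: it is imported from \cite[Theorem 2 and Lemma 4]{Pr-1} and \cite[Lemma 3.1]{YS}, so what you are attempting is a re-proof of those results. Your scaffolding is correct as far as it goes: the triangular-matrix computation giving $\N\cap\ggg_0=\ggg_1$, the formula $\phi\cdot D=(1/g'(X))\,D$ for $g=\phi(X)$ together with the resulting description $G\cdot D=\{uD\mid u \text{ invertible},\ [X^{p-1}](1/u)=0\}$, and the stabilizer argument ${\rm Lie}(G_D)\subseteq\ggg_0\cap{\mathfrak z}_{\ggg}(D)=0$ yielding $\dim G\cdot D=p-1$ and hence (iii), are all sound. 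The problem is that the two steps you defer as ``obstacles'' are not loose ends but the entire mathematical content of the lemma, and you supply neither: (a) that $(uD)^{[p]}=0$ with $u$ invertible forces $[X^{p-1}](1/u)=0$, i.e.\ the inclusion $(\ggg\setminus\ggg_{0})\cap\N\subseteq G\cdot D$; and (b) the density $\ggg_1\subseteq\overline{G\cdot D}$. Without (a), statement (ii), the identification in (i), and your openness claim (which presupposes $G\cdot D=(\ggg\setminus\ggg_{0})\cap\N$) are all unproved; without (b), density in (i) is unproved, and it is genuinely delicate: for every $t\neq0$ one has $D+tX^2D\notin G\cdot D$, since the $X^{p-1}$-coefficient of $1/(1+tX^2)$ is $(-t)^{(p-1)/2}\neq0$, so naive straight-line degenerations from $D$ into $\ggg_1$ leave the orbit immediately.

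If you want to complete the argument along your lines: for (a), Hochschild's identity $(uD)^p=u^pD^p+\big((uD)^{p-1}(u)\big)D=\big((uD)^{p-1}(u)\big)D$ (since $D^p=0$ on $\mathfrak{A}$) reduces nilpotency of $uD$ to the single equation $(uD)^{p-1}(u)=0$, and one must then identify its vanishing with that of the residue of $1/u$ (this is exactly the content of the cited results). For (b), you do not need irreducibility of your polynomial $P$: since $1/u=u^{p-1}/u_0^{p}$ in $\mathfrak{A}$, one may take $P(u)=[X^{p-1}](u^{p-1})$, and granted (a) one has $\N=Z(P)$ set-theoretically (its restriction to $u_0=0$ is $u_1^{p-1}$, recovering $\N\cap\ggg_0=\ggg_1$); hence every irreducible component of $\N$ has dimension $p-1$, so no component can lie in $\ggg_1$ (which has dimension $p-2$), so every component meets the open subset $G\cdot D$ and is contained in its irreducible closure, giving $\N=\overline{G\cdot D}$. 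As submitted, however, the proposal is an accurate outline with the two decisive steps missing.
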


Let $\CC(\N):=\{(x,y)\in\N\times\N\mid [x,y]=0\}$, the nilpotent commuting variety of $\ggg$. Obviously, the Zariski closed set
$\CC(\N)$ is preserved by the diagonal action of $G$ on $\N\times\N$. In this section, we study the structure of the variety
$\CC(\N)$.

For $i\in\{1,\cdots, p-2\}$, set
$$C(i):=\{(x,y)\in\N\times\N\mid x\in\ggg_i\setminus\ggg_{i+1},\, [x,y]=0\}.$$
Let
$$C(0)=\{(x,ax)\mid x\in\N,\,a\in k\}$$
and
$$C(p-1)=\{(0,x)\mid x\in\N\}.$$
It is obvious that $C(p-1)$ is a closed subvariety of dimension $p-1$. Set
$${\mathfrak{C}}(i)=\overline{C(i)}\,\,\text{for}\,\,0\leq i\leq p-1.$$
We have the following preliminary result describing the nilpotent commuting variety $\CC(\N)$ of $\ggg$, the proof of which is straightforward.

\begin{lem}\label{lem-4}
Let $\ggg$ be the Witt algebra, $\N$ the nilpotent variety. Then
$\CC(\N)=\bigcup\limits_{i=0}^{p-1} C(i)$. Henceforth,
$\CC(\N)=\bigcup\limits_{i=0}^{p-1} {\mathfrak{C}}(i)$.
\end{lem}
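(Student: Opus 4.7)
The plan is to establish the first identity $\CC(\N) = \bigcup_{i=0}^{p-1} C(i)$ by a case analysis on the first coordinate $x$, invoking the decomposition $\N = G\cdot D \cup \ggg_1$ from Lemma \ref{lem-3}(ii) together with the centralizer description of Lemma \ref{lem-2}. The second identity will then follow formally from the fact that $\CC(\N)$ is Zariski closed in $\N\times\N$.

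First I would pick an arbitrary $(x,y) \in \CC(\N)$ and distinguish three cases. If $x = 0$, then $(x,y) = (0,y) \in C(p-1)$ by definition. If $x \neq 0$ and $x \in G\cdot D$, Lemma \ref{lem-2} gives $\mathfrak{z}_{\ggg}(x) = kx$, so $y = ax$ for some $a \in k$, and hence $(x,y) \in C(0)$. Otherwise, by Lemma \ref{lem-3}(ii) we must have $x \in \ggg_1$, and since $x \neq 0$ while $\ggg_{p-1} = 0$, there is a unique $i$ with $1 \leq i \leq p-2$ such that $x \in \ggg_i \setminus \ggg_{i+1}$, which places $(x,y)$ in $C(i)$. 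This proves the inclusion $\CC(\N) \subseteq \bigcup_{i=0}^{p-1} C(i)$; the reverse containment is immediate from the definitions of the $C(i)$.

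For the second identity, since $\CC(\N)$ is Zariski closed and each $C(i) \subseteq \CC(\N)$, taking closures yields $\mathfrak{C}(i) = \overline{C(i)} \subseteq \CC(\N)$ for every $i$. Combined with the first identity this gives the chain
\[
\CC(\N) \;=\; \bigcup_{i=0}^{p-1} C(i) \;\subseteq\; \bigcup_{i=0}^{p-1} \mathfrak{C}(i) \;\subseteq\; \CC(\N),
\]
so equality holds throughout.

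There is no substantial obstacle here: the lemma is essentially a tidy repackaging of the structural information already supplied by Lemmas \ref{lem-2} and \ref{lem-3}. The one point that requires care is exhaustiveness of the trichotomy on $x$, which depends on the disjointness $G\cdot D \cap \ggg_1 = \emptyset$, a direct consequence of the identification $G\cdot D = (\ggg \setminus \ggg_0) \cap \N$ from Lemma \ref{lem-3}(i) and the inclusion $\ggg_1 \subset \ggg_0$.
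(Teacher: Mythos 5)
Your proof is correct and is exactly the ``straightforward'' verification the paper leaves to the reader: a case split on $x$ using the decomposition $\N=G\cdot D\cup\ggg_1$ of Lemma \ref{lem-3}(ii) and the centralizer description of Lemma \ref{lem-2}, followed by the closure argument using that $\CC(\N)$ is Zariski closed. One tiny remark: exhaustiveness of your trichotomy needs only the union $\N=G\cdot D\cup\ggg_1$, not the disjointness $G\cdot D\cap\ggg_1=\emptyset$ you cite at the end, so that last observation is superfluous (though harmless).
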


\begin{lem}\label{lem-5}
${\mathfrak{C}}(i)$ is irreducible for any $0\leq i\leq p-1$, and
$$\dim {\mathfrak{C}}(i)=
\begin{cases}
p, &\text{if}\,\,\,0\leq i<\frac{p-1}{2}, \cr p-1, &\text{if}\,\,\, \frac{p-1}{2}\leq i\leq p-1.
\end{cases}$$
Moreover, ${\mathfrak{C}}(p-1)\subseteq {\mathfrak{C}}(0)$.
\end{lem}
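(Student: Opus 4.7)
The plan is to describe each $C(i)$ explicitly as (the image of) a product involving the stratum $\ggg_i\setminus\ggg_{i+1}$, using Lemma \ref{lem-2} to identify centralisers, and then pass to Zariski closures. A uniform observation simplifies everything in the range $1\le i\le p-2$: Lemma \ref{lem-2} gives $\mathfrak{z}_\ggg(x)\subseteq\ggg_1$, and $\ggg_1\subseteq\N$ by Lemma \ref{lem-3}(ii), so the nilpotency condition on $y$ is automatic from $[x,y]=0$ and imposes no further constraint.

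I first handle the boundary indices. For $i=p-1$, the variety $C(p-1)=\{0\}\times\N$ is already closed and is isomorphic to $\N$; by Lemma \ref{lem-3}(i) and the connectedness of $G$ from Lemma \ref{lem-1}(i), $\N=\overline{G\cdot D}$ is irreducible, and $\dim\N=p-1$ by Lemma \ref{lem-3}(iii). For $i=0$, $C(0)$ is the image of the morphism $\N\times\mathbb{A}^1\to\N\times\N$, $(x,a)\mapsto(x,ax)$, which is injective on the dense open subset $(\N\setminus\{0\})\times\mathbb{A}^1$; hence $\mathfrak{C}(0)=\overline{C(0)}$ is irreducible of dimension $p$.

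For $1\le i\le p-2$, Lemma \ref{lem-2} splits the argument into two subcases. When $1\le i<\frac{p-1}{2}$, the centraliser is $kx\oplus\ggg_{p-1-i}$; the inequality $p-1-i\ge i+1$ yields $\ggg_{p-1-i}\subseteq\ggg_{i+1}$, so for $x\in\ggg_i\setminus\ggg_{i+1}$ one has $kx\cap\ggg_{p-1-i}=0$. Hence the morphism $(\ggg_i\setminus\ggg_{i+1})\times k\times\ggg_{p-1-i}\to C(i)$ given by $(x,a,z)\mapsto(x,ax+z)$ is a bijection from an irreducible variety of dimension $(p-1-i)+1+i=p$, so $\mathfrak{C}(i)$ is irreducible of dimension $p$. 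When $\frac{p-1}{2}\le i\le p-2$, the centraliser $\ggg_{p-1-i}$ is independent of $x$, so $C(i)=(\ggg_i\setminus\ggg_{i+1})\times\ggg_{p-1-i}$ is open in the irreducible product $\ggg_i\times\ggg_{p-1-i}$, of dimension $(p-1-i)+i=p-1$; passing to closure gives $\mathfrak{C}(i)=\ggg_i\times\ggg_{p-1-i}$.

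Finally, I would establish $\mathfrak{C}(p-1)\subseteq\mathfrak{C}(0)$ by a scaling argument: for any $y\in\N$ the morphism $\mathbb{A}^1\to\N\times\N$, $t\mapsto(ty,y)$, takes values in $C(0)$ over $t\ne 0$ (since $(ty,y)=(ty,t^{-1}\cdot ty)$ and $\N$ is stable under scalar multiplication) and specialises at $t=0$ to $(0,y)$, so $\{0\}\times\N\subseteq\mathfrak{C}(0)$. The one step requiring real care is the verification that $kx\cap\ggg_{p-1-i}=0$ in the first subcase, which secures the bijectivity of the parameterisation; the remainder reduces to bookkeeping in the grading.
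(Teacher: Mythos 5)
Your proposal is correct and follows essentially the same route as the paper: parameterizing $C(i)$ via the centralizer description of Lemma \ref{lem-2} (the maps $(x,a,z)\mapsto(x,ax+z)$ for $1\le i<\frac{p-1}{2}$ and the product description for $\frac{p-1}{2}\le i\le p-2$), and proving ${\mathfrak{C}}(p-1)\subseteq{\mathfrak{C}}(0)$ by the same scaling/closure argument. The only difference is that you spell out details the paper labels as obvious (irreducibility of $\N$ via Lemma \ref{lem-3}, the verification $kx\cap\ggg_{p-1-i}=0$, and $\mathfrak{z}_\ggg(x)\subseteq\ggg_1\subseteq\N$), which is fine.
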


\begin{proof}
Obviously, $\mathfrak{C}(0)$ and ${\mathfrak{C}}(p-1)$ are irreducible varieties of dimension $p$ and $p-1$, respectively. For $1\leq i<\frac{p-1}{2}$,
let
$$
\aligned
\varphi: \,\, (\ggg_i\setminus\ggg_{i+1})\times\ggg_{p-1-i}\times {\mathbb{A}}^1
&\longrightarrow C(i)\cr  (x, z, a) & \longmapsto
(x, ax+z)
\endaligned
$$
be the canonical morphism. It follows from Lemma \ref{lem-2} that $\varphi$ is bijective, so that ${\mathfrak{C}}(i)$ is irreducible, and
$$\dim {\mathfrak{C}}(i)=\dim (\ggg_i\setminus\ggg_{i+1})+\dim\ggg_{p-1-i}+1=(p-1-i)+i+1=p.$$
For $\frac{p-1}{2}\leq i\leq p-2$, let
$$
\aligned
\psi: \,\, (\ggg_i\setminus\ggg_{i+1})\times\ggg_{p-1-i}
&\longrightarrow C(i)\cr  (x, y) & \longmapsto
(x, y)
\endaligned
$$
be the canonical morphism. It follows from Lemma \ref{lem-2} that $\psi$ is an isomorphism, so that ${\mathfrak{C}}(i)$ is irreducible, and
$$\dim {\mathfrak{C}}(i)=\dim (\ggg_i\setminus\ggg_{i+1})+\dim\ggg_{p-1-i}=(p-1-i)+i=p-1.$$

Fix $x\in\N$, then
$$\{(\lambda x, x)\mid \lambda\in k^{\times}\}\subseteq C(0).$$
Since
$$\{(\lambda x, x)\mid \lambda\in k^{\times}\}\cong k^{\times},$$
it follows that
$$\{(ax, x)\mid a\in k\}=\overline{\{(\lambda x, x)\mid \lambda\in k^{\times}\}}\subseteq \overline{C(0)}={\mathfrak{C}}(0).$$
In particular, $(0,x)\in {\mathfrak{C}}(0)$ for any $x\in\N$, i.e.,
$${\mathfrak{C}}(p-1)=\{(0,x)\mid x\in\N\}\subseteq {\mathfrak{C}}(0).$$
\end{proof}

As a direct consequence, we have

\begin{cor}
Let $\ggg=W_1$ be the Witt algebra, $\N$ the nilpotent variety of $\ggg$, and $\CC(\N)$ the nilpotent commuting variety of $\ggg$.
Then $\dim\CC(\N)=p$.
\end{cor}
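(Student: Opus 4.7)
The plan is very short because almost all the work has already been done in Lemma \ref{lem-5}. By Lemma \ref{lem-4} we have the finite decomposition
\[
\CC(\N)=\bigcup_{i=0}^{p-1}{\mathfrak{C}}(i),
\]
and since the dimension of a Noetherian topological space is the supremum of the dimensions of its irreducible closed subsets, I would simply compute
\[
\dim\CC(\N)=\max_{0\le i\le p-1}\dim{\mathfrak{C}}(i).
\]

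From Lemma \ref{lem-5}, each ${\mathfrak{C}}(i)$ is irreducible, with $\dim{\mathfrak{C}}(i)=p$ for $0\le i<\frac{p-1}{2}$ and $\dim{\mathfrak{C}}(i)=p-1$ otherwise. The maximum is therefore $p$, realized already by ${\mathfrak{C}}(0)$, so $\dim\CC(\N)=p$.

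There is no real obstacle here, since the only nontrivial inputs (irreducibility and the dimension counts via the parametrizations $\varphi$ and $\psi$) were handled in the previous lemma; this corollary is purely a bookkeeping consequence of Lemmas \ref{lem-4} and \ref{lem-5}.
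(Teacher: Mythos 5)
Your argument is correct and is exactly the route the paper intends: the corollary is stated there as a direct consequence of the decomposition in Lemma \ref{lem-4} and the dimension formulas in Lemma \ref{lem-5}, with the dimension of the finite union being the maximum of the $\dim{\mathfrak{C}}(i)$, namely $p$, attained by ${\mathfrak{C}}(0)$. Nothing is missing.
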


Combining Lemma \ref{lem-4} with Lemma \ref{lem-5}, we get the following result which determines the possible irreducible components of the nilpotent commuting variety $\CC(\N)$.
\begin{prop}\label{prop-1}
Let $\ggg=W_1$ be the Witt algebra, $\N$ the nilpotent variety of $\ggg$. Let $\CC(\N)$ be the nilpotent commuting
variety of $\ggg$. Then each irreducible component of $\CC(\N)$ is of the form ${\mathfrak{C}}(i)$ for some $i\in\{0,1,\cdots, p-2\}$.
\end{prop}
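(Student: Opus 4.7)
The plan is to deduce this directly from the two preceding lemmas combined with a standard fact about Noetherian topological spaces. By Lemma \ref{lem-4} we already have the decomposition $\CC(\N) = \bigcup_{i=0}^{p-1} \mathfrak{C}(i)$ as a finite union of closed subsets, and by Lemma \ref{lem-5} each $\mathfrak{C}(i)$ is irreducible, with the additional containment $\mathfrak{C}(p-1) \subseteq \mathfrak{C}(0)$. So first I would discard $i = p-1$ as redundant and rewrite the decomposition as $\CC(\N) = \bigcup_{i=0}^{p-2} \mathfrak{C}(i)$, a finite union of irreducible closed subvarieties of $\N \times \N$.

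Next I would invoke the general principle that if a topological space is written as a finite union of irreducible closed subsets, then every irreducible component of the whole space must coincide with one of those subsets. Concretely, if $Z$ is an irreducible component of $\CC(\N)$, then
\[
Z \;=\; Z \cap \CC(\N) \;=\; \bigcup_{i=0}^{p-2} \bigl(Z \cap \mathfrak{C}(i)\bigr),
\]
and since $Z$ is irreducible while each $Z \cap \mathfrak{C}(i)$ is closed in $Z$, one of these intersections must fill up all of $Z$, giving $Z \subseteq \mathfrak{C}(i)$ for some $i \in \{0,1,\dots,p-2\}$. Since $\mathfrak{C}(i)$ is itself irreducible and closed in $\CC(\N)$, the maximality of $Z$ as an irreducible closed subset forces $Z = \mathfrak{C}(i)$.

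There is no real obstacle: the substantive input — irreducibility of each $\mathfrak{C}(i)$, the dimension computations, and the containment $\mathfrak{C}(p-1) \subseteq \mathfrak{C}(0)$ — was already carried out in Lemma \ref{lem-5}, while the covering itself is Lemma \ref{lem-4}. Thus the proposition is a bookkeeping step that narrows the list of candidate components to $\{\mathfrak{C}(0), \mathfrak{C}(1), \dots, \mathfrak{C}(p-2)\}$. The remaining task — deciding which of these candidates are genuine irreducible components, i.e.\ which are not contained in another $\mathfrak{C}(j)$ — is presumably handled in Theorem \ref{thm-1} by the dimension dichotomy from Lemma \ref{lem-5} (the $\mathfrak{C}(i)$ with $0 \leq i < \tfrac{p-1}{2}$ have dimension $p$ and hence cannot sit inside any $(p-1)$-dimensional $\mathfrak{C}(j)$) together with a further argument ruling out inclusions within the top-dimensional family.
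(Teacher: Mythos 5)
Your argument is correct and is exactly what the paper intends: the paper states this proposition as an immediate consequence of combining Lemma \ref{lem-4} (the covering by the closed irreducible sets ${\mathfrak{C}}(i)$) with Lemma \ref{lem-5} (irreducibility and the containment ${\mathfrak{C}}(p-1)\subseteq{\mathfrak{C}}(0)$), leaving the standard Noetherian-space argument implicit. You have simply written out that routine step in full, so your proof matches the paper's approach.
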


Now we are ready for the main result of this section.

\begin{thm}\label{thm-1}
Let $\ggg=W_1$ be the Witt algebra, $\N$ the nilpotent variety of $\ggg$. Then the nilpotent commuting variety $\CC(\N)$ of $\ggg$ is reducible and
equidimensional. More precisely, $\CC(\N)=\bigcup\limits_{i=0}^{(p-3)/2}{\mathfrak{C}}(i)$ is the decomposition of $\CC(\N)$ into irreducible components.
\end{thm}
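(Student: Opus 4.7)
The plan is to start from Proposition~\ref{prop-1}, which pins down the possible irreducible components as $\mathfrak{C}(0),\ldots,\mathfrak{C}(p-2)$, and then to carry out two tasks: (a) prove each $\mathfrak{C}(i)$ with $i\geq (p-1)/2$ is contained in some $\mathfrak{C}(j)$ with $j\leq (p-3)/2$, and (b) prove the remaining $\mathfrak{C}(j)$, $j\in\{0,\ldots,(p-3)/2\}$, are pairwise incomparable. Part (b) is quick: by Lemma~\ref{lem-5} these all have dimension $p$, so it suffices to distinguish them, and this follows by comparing the first-coordinate projections $\pi_1(\mathfrak{C}(j))=\ggg_j$ for $1\leq j\leq (p-3)/2$ and $\pi_1(\mathfrak{C}(0))=\N\not\subseteq\ggg_0$.

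For (a), I would fix $(x,y)\in C(i)$ with $i\geq (p-1)/2$ and, when $y\neq 0$, let $y\in\ggg_l\setminus\ggg_{l+1}$. Three sub-cases reduce immediately via a one-parameter deformation of $x$ alone. If $y=0$, then $(x,0)\in C(0)$. If $l\leq (p-3)/2$, the pair $(x+ty,\,y)$ commutes, and $x\in\ggg_i\subseteq\ggg_{l+1}$ while $y\notin\ggg_{l+1}$ forces $x+ty\in\ggg_l\setminus\ggg_{l+1}$ for $t\neq 0$, so $(x,y)\in\mathfrak{C}(l)$. If $l\geq (p+1)/2$, Lemma~\ref{lem-2} gives $\mathfrak{z}_\ggg(y)=\ggg_{p-1-l}$ with $p-1-l\leq (p-3)/2$; picking $w\in\ggg_{p-1-l}\setminus\ggg_{p-l}$ (and using $i\geq p-l$), the pair $(x+tw,\,y)$ lies in $C(p-1-l)$ for $t\neq 0$.

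The main obstacle is the remaining case $l=(p-1)/2$: then $\mathfrak{z}_\ggg(y)=\ggg_{(p-1)/2}$ contains no element of strictly lower filtration, so no perturbation of $x$ alone can drop below $\ggg_{(p-1)/2}$, and a joint deformation is needed. Write $x=\alpha X^{(p+1)/2}D+x'$ and $y=\beta X^{(p+1)/2}D+y'$ with $x',y'\in\ggg_{(p+1)/2}$; here $\beta\neq 0$. When $i=(p-1)/2$ (hence $\alpha\neq 0$), I would take
$$\bigl(x(t),\,y(t)\bigr):=\bigl(x+tX^{(p-1)/2}D,\;y+t(\beta/\alpha)X^{(p-1)/2}D\bigr).$$
Using the identity $[X^{(p-1)/2}D,X^aD]=0$ for $a\geq (p+3)/2$, the only surviving order-$t$ terms in $[x(t),y(t)]$ are $(\beta/\alpha)[x,X^{(p-1)/2}D]=-\beta X^{p-1}D$ and $[X^{(p-1)/2}D,y]=\beta X^{p-1}D$, which cancel; meanwhile $x(t)\in\ggg_{(p-3)/2}\setminus\ggg_{(p-1)/2}$ for $t\neq 0$, so $(x,y)\in\mathfrak{C}((p-3)/2)$. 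When $i>(p-1)/2$, this formula degenerates because $\alpha=0$, so I would instead invoke the flip $\sigma(u,v):=(v,u)$: it is an involutive automorphism of $\CC(\N)$ preserving each $\mathfrak{C}(j)$ with $j\leq (p-3)/2$, since the generic parametrization of $C(j)$ in Lemma~\ref{lem-5} with $a\neq 0$ gives $(ax+z,\,x)\in C(j)$ again, forcing $\sigma(\mathfrak{C}(j))=\mathfrak{C}(j)$. Since $(y,x)\in C((p-1)/2)$ has $\beta\neq 0$ in its first coordinate, it falls under the preceding sub-case, and applying $\sigma$ returns $(x,y)\in\mathfrak{C}((p-3)/2)$. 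Combining (a) and (b) yields the claimed decomposition of $\CC(\N)$ into $(p-1)/2$ irreducible components of common dimension $p$.
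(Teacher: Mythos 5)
Your proposal is correct, but it reaches the theorem by a more computational route than the paper. The paper never degenerates the strata ${\mathfrak{C}}(i)$ with $i>\frac{p-1}{2}$ at all: it first shows, via the $GL(2,k)$-action and connectedness of $GL(2,k)$, that every irreducible component of $\CC(\N)$ is stable under the flip $\sigma$, then uses the projection $\pi_1$ to rule out ${\mathfrak{C}}(i)$, $i>\frac{p-1}{2}$, as a component abstractly (its Step 3); only ${\mathfrak{C}}(\frac{p-1}{2})=\ggg_{\frac{p-1}{2}}\times\ggg_{\frac{p-1}{2}}$ is explicitly absorbed into $\bigcup_{j\le (p-3)/2}{\mathfrak{C}}(j)$, and its hardest case (both coordinates outside $\ggg_{\frac{p+1}{2}}$) is handled by acting with a unipotent matrix on a pair already known to lie in a component. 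You instead prove the stronger, fully explicit statement that every $C(i)$ with $i\ge\frac{p-1}{2}$ degenerates into $\bigcup_{j\le(p-3)/2}{\mathfrak{C}}(j)$, settling the hard case by the simultaneous deformation $(x+tX^{(p-1)/2}D,\;y+t(\beta/\alpha)X^{(p-1)/2}D)$ --- your bracket computation is right, since $[X^{(p-1)/2}D,X^aD]=0$ for $a\ge\frac{p+3}{2}$ --- and you obtain $\sigma({\mathfrak{C}}(j))={\mathfrak{C}}(j)$ directly from the parametrization of $C(j)$ (valid because $\ggg_{p-1-j}\subseteq\ggg_{j+1}$ for $j\le\frac{p-3}{2}$) rather than from connectedness of $GL(2,k)$; this buys independence from knowing in advance that ${\mathfrak{C}}(j)$ is a component and gives explicit containments, at the cost of the case-by-case analysis that the paper's abstract elimination avoids. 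Two small touch-ups to make your reduction airtight: in the case $l\ge\frac{p+1}{2}$, justify $i\ge p-l$ (since $y\in{\mathfrak{z}}_{\ggg}(x)=\ggg_{p-1-i}$ gives $l\ge p-1-i$, and $l=p-1-i$ would force $i\le\frac{p-3}{2}$, contradicting $i\ge\frac{p-1}{2}$); and in the final sub-case the flipped pair $(y,x)$ does not literally have $l=\frac{p-1}{2}$ --- its second coordinate lies in $\ggg_{\frac{p+1}{2}}$, so it falls under your $l\ge\frac{p+1}{2}$ case, or, equivalently, your deformation formula applies to $(y,x)$ with the second coefficient equal to $0$ because $[X^{(p-1)/2}D,x]=0$ for $x\in\ggg_{\frac{p+1}{2}}$; with either fix the argument, combined with your incomparability step, correctly yields the decomposition into $\frac{p-1}{2}$ components of dimension $p$.
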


\begin{proof}
We divide the proof into several steps.

\textbf{Step 1:}  The group $GL(2,k)$ acts on $\ggg\times\ggg$ via
\[
\left(\begin{array}{cc}
\alpha& \beta\\
\gamma&\delta
\end{array}\right)\cdot(x,y)=(\alpha x+\beta y, \gamma x+\delta y).
\]
Since any linear combination of two commuting elements in $\N$ is again in $\N$, the nilpotent commuting variety $\CC(\N)$ is $GL(2,k)$-invariant.
As $GL(2,k)$ is a connected group, it fixes each irreducible component of $\CC(\N)$. In particular, each irreducible component of $\CC(\N)$ is invariant
under the involution $\sigma:\,(x,y)\mapsto (y,x)$ on $\N\times\N$.

\textbf{Step 2:} Let
$$
\aligned
\pi_1: \,\, \N\times\N
&\twoheadrightarrow \N\cr  (x, y) &\mapsto
x
\endaligned
$$
be the canonical projection. Then
$$\pi_1(C(i))=\ggg_i\setminus\ggg_{i+1},\,1\leq i\leq p-2,$$
and $$\pi_1(C(0))=\N,$$
so that
$$\pi_1({\mathfrak{C}}(i))=\pi_1(\overline{C(i)})=\overline{\ggg_i\setminus\ggg_{i+1}}=\ggg_i$$
and
$$\pi_1({\mathfrak{C}}(0))=\pi_1(\overline{C(0)})=\overline{\N}=\N.$$
It follows that ${\mathfrak{C}}(i)\neq {\mathfrak{C}}(j)$ for distinct $i,j\in\{0,\cdots, p-2\}$.

\textbf{Step 3:} If ${\mathfrak{C}}(i)$ is an irreducible component of $\CC(\N)$ for some
$i\geq 1$, we aim to show that $i\leq\frac{p-1}{2}$. For any $x\in\ggg_i\setminus\ggg_{i+1}$ and $y\in {\mathfrak{z}}_{\ggg}(x)$, since
$(x,y)\in {\mathfrak{C}}(i)$, it follows from Step 1 that $(y,x)\in {\mathfrak{C}}(i)$. Consequently,
$$y=\pi_1(y,x)\in \pi_1({\mathfrak{C}}(i))=\ggg_i.$$
Hence, ${\mathfrak{z}}_{\ggg}(x)\subseteq \ggg_i$. It follows from Lemma \ref{lem-2} that $\ggg_{p-1-i}\subseteq
{\mathfrak{z}}_{\ggg}(x)\subseteq \ggg_i$. Hence, $p-1-i\geq i$, i.e., $i\leq \frac{p-1}{2}$.

In conclusion, the set of possible irreducible components in $\CC(\N)$  is $\{{\mathfrak{C}}(i)\mid 0\leq i\leq \frac{p-1}{2}\}$.

\textbf{Step 4:} ${\mathfrak{C}}(i)$ is an irreducible component of $\CC(\N)$ for $0\leq i\leq \frac{p-3}{2}$. Indeed, if ${\mathfrak{C}}(i)$ is not an irreducible component, it must be contained in ${\mathfrak{C}}(j)$ for some $0\leq j\leq\frac{p-1}{2}$ and $j\neq i$ by Step 3. Moreover, we get ${\mathfrak{C}}(i)={\mathfrak{C}}(j)$ by comparing the dimension. This contradicts the assertion in Step 2.

\textbf{Step 5:} By Lemma \ref{lem-2},
$$
\aligned C(\frac{p-1}{2})=&\{(x,y)\mid x\in\ggg_{\frac{p-1}{2}}\setminus\ggg_{\frac{p+1}{2}},\, [x,y]=0\}\cr
=&\{(x,y)\mid x\in\ggg_{\frac{p-1}{2}}\setminus\ggg_{\frac{p+1}{2}},\, y\in\ggg_{\frac{p-1}{2}} \}.
\endaligned $$
It follows that
$${\mathfrak{C}}(\frac{p-1}{2})=\overline{C(\frac{p-1}{2})}=\ggg_{\frac{p-1}{2}}\times\ggg_{\frac{p-1}{2}}.$$
Moreover,
$${\mathfrak{C}}(\frac{p-1}{2})\subseteq\bigcup\limits_{i=0}^{(p-3)/2}{\mathfrak{C}}(i).$$
In fact, for any $(x,y)\in\ggg_{\frac{p-1}{2}}\times\ggg_{\frac{p-1}{2}}$, we claim that $(x,y)\in {\mathfrak{C}}(i)$ for some $i\in\{0,\cdots, \frac{p-3}{2}\}$. We divide the discussion into the following cases.

\textbf{Case 1:} $x=0$ or $y=0$.

In this case, it is obvious that $(x,y)\in {\mathfrak{C}}(0)$.

\textbf{Case 2:} $y\in\ggg_j\setminus\ggg_{j+1}$ for some $j>\frac{p-1}{2}$.

In this case, set $i=p-1-j<\frac{p-1}{2}$, then
$$\{(u,y)\in\N\times\N\mid u\in\ggg_i\setminus\ggg_{i+1}\}\subseteq C(i).$$
It follows from Lemma \ref{lem-2} that
$$(x,y)\in\{(v,y)\in\N\times\N\mid v\in\ggg_i\}=\overline{\{(u,y)\in\N\times\N\mid u\in\ggg_i\setminus\ggg_{i+1}\}}\subseteq {\mathfrak{C}}(i).$$

\textbf{Case 3:} $x\in\ggg_j\setminus\ggg_{j+1}$ for some $j> \frac{p-1}{2}$.

According to Case 2, $(y,x)\in {\mathfrak{C}}(i)$ for some $i<\frac{p-1}{2}$. Since
\[
(x,y)=\left(\begin{array}{cc}
0& 1\\
1&0
\end{array}\right)\cdot (y,x),
\]

it follows from Step 1 and Step 4 that $(x,y)\in {\mathfrak{C}}(i)$.

\textbf{Case 4:} $x,y \in\ggg_{\frac{p-1}{2}}\setminus\ggg_{\frac{p+1}{2}}$.

In this case, $y=ax+z$ for some $a\in k^{\times}$ and $z\in\ggg_j$ with $j> \frac{p-1}{2}$. Since
\[
(x,y)=\left(\begin{array}{cc}
1&0\\
a&1
\end{array}\right)\cdot (x,z),
\]
it follows from Step 1, Step 4, Case 1 and Case 2 that $(x,y)\in {\mathfrak{C}}(i)$ for $i=p-1-j<\frac{p-1}{2}$ or $i=0$.

In conclusion, $(x,y)\in {\mathfrak{C}}(i)$ for some $i\in\{0,\cdots, \frac{p-3}{2}\}$.

\textbf{Step 6:} It follows from Step 4 and Step 5 that the set of irreducible components of $\CC(\N)$ is exactly $\{{\mathfrak{C}}(i)\mid 0\leq i\leq
\frac{p-3}{2}\}$, so that $\CC(\N)=\bigcup\limits_{i=0}^{(p-3)/2}{\mathfrak{C}}(i)$ is the decomposition of $\CC(\N)$ into irreducible components.

The proof is completed.
\end{proof}

Since
$(0,0)\in\bigcap\limits_{i=0}^{(p-3)/2} {\mathfrak{C}}(i),$
the following result is a direct consequence of Theorem \ref{thm-1}.

\begin{cor}\footnote{We thank Nham V. Ngo for his helpful discussion.}\label{cor-1}
Let $\ggg=W_1$ be the Witt algebra, $\N$ the nilpotent variety. Then the nilpotent commuting variety $\CC(\N)$ is not normal.
\end{cor}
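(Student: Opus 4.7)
The plan is to invoke the standard principle that a normal variety is locally irreducible at every point, and then observe that the origin $(0,0)$ is a point where local irreducibility fails because it lies on every irreducible component exhibited in Theorem \ref{thm-1}.

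More concretely, I would argue as follows. Recall that if $X$ is a normal algebraic variety, then for every $x\in X$ the local ring $\mathcal{O}_{X,x}$ is a normal (hence integrally closed) Noetherian local domain; in particular it is an integral domain, so $\mathrm{Spec}\,\mathcal{O}_{X,x}$ is irreducible. Translating back geometrically, this means $x$ is contained in a unique irreducible component of $X$. Equivalently, two distinct irreducible components of a normal variety must be disjoint. I would state this as a brief lemma (or just cite it as well-known, e.g.\ from Hartshorne or Eisenbud's commutative algebra text) before applying it.

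Now, by Theorem \ref{thm-1}, the decomposition of $\CC(\N)$ into irreducible components is
\[
\CC(\N)=\bigcup_{i=0}^{(p-3)/2}\mathfrak{C}(i),
\]
and by the hypothesis $p>3$ we have $\tfrac{p-3}{2}\geq 0$ with strictly more than one component as soon as $p\geq 5$; indeed $\tfrac{p-1}{2}\geq 2$. Each $\mathfrak{C}(i)$ contains the pair $(0,0)$, since $(0,0)\in C(i)\subseteq\mathfrak{C}(i)$ for every $i\in\{0,1,\dots,(p-3)/2\}$ (the elements $x=0$ and $y=0$ trivially satisfy the defining conditions, and $\mathfrak{C}(0)$ certainly contains $(0,0)$). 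Hence $(0,0)\in\bigcap_{i=0}^{(p-3)/2}\mathfrak{C}(i)$ lies on at least two distinct irreducible components of $\CC(\N)$.

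Combining these two observations finishes the argument: the variety $\CC(\N)$ is not locally irreducible at the origin, so it cannot be normal. I do not expect any substantial obstacle here; the only subtlety is making sure that what is cited is really the local-irreducibility consequence of normality (as opposed to, say, smoothness), but this is standard and requires only a sentence of justification.
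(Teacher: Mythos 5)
Your argument is correct and is essentially the paper's own: the paper deduces the corollary from Theorem \ref{thm-1} precisely because $(0,0)\in\bigcap_{i=0}^{(p-3)/2}\mathfrak{C}(i)$, i.e.\ the origin lies on all of the (at least two, since $p\geq 5$) irreducible components, contradicting the local irreducibility of normal varieties. You merely make explicit the standard fact that the local ring of a normal variety is a domain, which the paper leaves implicit.
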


\section{Nilpotent commuting varieties of Borel subalgebras in the Witt algebra}
Let $\ggg=W_1$ be the Witt algebra and $\mathscr{B}$ be a Borel subalgebra. Let $\mathscr{N}(\mathscr{B})$ be the nilpotent variety of
$\mathscr{B}$, and
$$\CC({\mathscr{N}\big(\mathscr{B})}\big)=\{(x,y)\in {\mathscr{N}(\mathscr{B})\times \mathscr{N}(\mathscr{B})}\mid [x,y]=0\}$$
the nilpotent commuting variety of $\mathscr{B}$. According to \cite{YC}, $\mathscr{B}$ is conjugate to $\mathscr{B}^+$ or $\mathscr{B}^-$ under
the automorphism group $G=\Aut(\ggg)$ of $\ggg$, where ${\mathscr{B}}^+=\ggg_0$ and ${\mathscr{B}}^-=$span$_k\{D, XD\}$ are the so-called standard
Borel subalgebras. It is easy to check that ${\mathscr{N}(\mathscr{B}^-)}=kD$ and $\CC(\N(\mathscr{B}^-))=\mathscr{N}(\mathscr{B}^-)\times
\mathscr{N}(\mathscr{B}^-)$. In the following, we always assume $\mathscr{B}=\mathscr{B}^+$. In this case, ${\N(\mathscr{B})}=\ggg_1$. We will
determine the structure of the nilpotent commuting variety $\CC(\ggg_1)$ of the Borel subalgebra $\mathscr{B}=\mathscr{B}^+$.

Set
$$C(p)=\{(0,x)\mid x\in\ggg_1\},\,\,{\mathfrak{C}}(p)=\overline{C(p)}.$$
We have the following preliminary result describing the nilpotent commuting variety $\CC(\ggg_1)$ of the Borel subalgebra $\mathscr{B}^+$,
the proof of which is straightforward.
\begin{lem}\label{lem-6}
Let $\ggg$ be the Witt algebra. Then $\CC(\ggg_1)=C(p)\cup\big(\bigcup\limits_{i=1}^{p-2} C(i)\big)$. Henceforth,
$\CC(\ggg_1)={\mathfrak{C}}(p)\cup\big(\bigcup\limits_{i=1}^{p-2} {\mathfrak{C}}(i)\big)$.
\end{lem}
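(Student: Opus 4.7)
The plan is to establish the first set-theoretic equality by a direct case analysis on $x$, and then obtain the closure version by a standard topological manipulation. The only nontrivial input is Lemma~\ref{lem-2}, which describes the centralizers $\mathfrak{z}_{\ggg}(x)$ explicitly in terms of the filtration.

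First I would verify the inclusion $C(p)\cup\bigcup_{i=1}^{p-2}C(i)\subseteq\CC(\ggg_1)$ piece by piece. The piece $C(p)\subseteq\CC(\ggg_1)$ is immediate since $(0,x)\in\ggg_1\times\ggg_1$ for every $x\in\ggg_1$. For $(x,y)\in C(i)$ with $1\leq i\leq p-2$, the first coordinate lies in $\ggg_i\setminus\ggg_{i+1}\subseteq\ggg_1$, and since $[x,y]=0$ with $x\neq 0$, Lemma~\ref{lem-2} forces $y\in\mathfrak{z}_{\ggg}(x)\subseteq kx\oplus\ggg_{p-1-i}$; because $1\leq p-1-i\leq p-2$, both $kx$ and $\ggg_{p-1-i}$ sit inside $\ggg_1$, so $y\in\ggg_1$ and $(x,y)\in\CC(\ggg_1)$.

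For the reverse inclusion, I would take an arbitrary $(x,y)\in\CC(\ggg_1)$ and split on the first coordinate. If $x=0$, then $(x,y)=(0,y)\in C(p)$ by definition. Otherwise, the disjoint decomposition $\ggg_1\setminus\{0\}=\bigcup_{i=1}^{p-2}(\ggg_i\setminus\ggg_{i+1})$ picks out a unique $i$ with $x\in\ggg_i\setminus\ggg_{i+1}$; since $x,y\in\ggg_1\subseteq\N$ and $[x,y]=0$, the pair $(x,y)$ belongs to $C(i)$. This completes the first equality.

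The closure statement then follows formally: $\CC(\ggg_1)$ is closed in $\ggg\times\ggg$ and contains each $C(i)$ and $C(p)$, hence contains each closure $\mathfrak{C}(i)$ and $\mathfrak{C}(p)$; the reverse inclusion is immediate from the first equality together with $C(i)\subseteq\mathfrak{C}(i)$ and $C(p)\subseteq\mathfrak{C}(p)$. I do not expect a genuine obstacle in any of the above; the only point requiring care is the numerical range $1\leq p-1-i\leq p-2$ for $i\in\{1,\dots,p-2\}$, which is precisely the reason why the pieces $C(0)$ and $C(p-1)$ used in Section~3 are replaced here by the single new piece $C(p)$ in the Borel setting.
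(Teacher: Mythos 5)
Your argument is correct and is precisely the straightforward verification the paper has in mind (the paper omits the proof as routine): the only substantive point, that $C(i)\subseteq\CC(\ggg_1)$ because Lemma~\ref{lem-2} forces the second coordinate into $kx+\ggg_{p-1-i}\subseteq\ggg_1$, is handled properly, as is the passage to closures using that $\CC(\ggg_1)$ is closed. No issues.
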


The following result describes the possible irreducible components of $\CC(\ggg_1)$.

\begin{prop}\label{prop-2}
Let $\ggg$ be the Witt algebra. Then each irreducible component of the nilpotent commuting variety $\CC(\ggg_1)$ is of the form ${\mathfrak{C}}(i)$ for some $i\in\{1,\cdots, p-2\}$.
\end{prop}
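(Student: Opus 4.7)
The plan is to start from the decomposition $\CC(\ggg_1) = {\mathfrak{C}}(p) \cup \bigcup_{i=1}^{p-2} {\mathfrak{C}}(i)$ given by Lemma \ref{lem-6}, observe that every piece in this union is irreducible, and then show that the extra piece ${\mathfrak{C}}(p)$ is absorbed into some ${\mathfrak{C}}(j)$ with $1 \leq j \leq p-2$. Each ${\mathfrak{C}}(i)$ with $1 \leq i \leq p-2$ has already been shown to be irreducible in Lemma \ref{lem-5}, and ${\mathfrak{C}}(p) = \{0\} \times \ggg_1$ is trivially irreducible as a linear subspace. Once the absorption is established, every irreducible component of $\CC(\ggg_1)$ will be forced to be a maximal element of the family $\{{\mathfrak{C}}(i) : 1 \leq i \leq p-2\}$, hence of the claimed form.

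For the absorption step I would take $j = p-2$ and imitate the limit argument used to prove ${\mathfrak{C}}(p-1) \subseteq {\mathfrak{C}}(0)$ at the end of Lemma \ref{lem-5}. The key input is the centralizer formula from Lemma \ref{lem-2}: for any nonzero $u \in \ggg_{p-2}$ (which belongs to $\ggg_{p-2}\setminus\ggg_{p-1}$, since $\ggg_{p-1}=0$) one has $\mathfrak{z}_\ggg(u) = \ggg_{p-1-(p-2)} = \ggg_1$. Equivalently, the one-dimensional top piece $\ggg_{p-2}$ commutes with the whole of $\ggg_1$. Therefore, fixing such a $u$, for each $x \in \ggg_1$ the pairs $(\lambda u, x)$ with $\lambda \in k^\times$ all lie in $C(p-2)$; letting $\lambda \to 0$ shows that $(0,x) \in \overline{C(p-2)} = {\mathfrak{C}}(p-2)$, and since $x$ was arbitrary we conclude $C(p) \subseteq {\mathfrak{C}}(p-2)$, hence ${\mathfrak{C}}(p) \subseteq {\mathfrak{C}}(p-2)$.

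Combining this inclusion with Lemma \ref{lem-6} will reduce the decomposition to $\CC(\ggg_1) = \bigcup_{i=1}^{p-2} {\mathfrak{C}}(i)$, from which the proposition follows by taking maximal members. I do not anticipate a genuine obstacle here; the structure of the argument is strictly parallel to the $\CC(\N)$ case, and the only nontrivial ingredient, namely that $\ggg_{p-2}$ centralizes $\ggg_1$, is read off directly from Lemma \ref{lem-2}. Determining which of the ${\mathfrak{C}}(i)$ are actually maximal, and hence enumerating the irreducible components precisely, belongs to the next theorem and need not be addressed here.
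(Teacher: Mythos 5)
Your proposal is correct and follows essentially the same route as the paper: start from the decomposition in Lemma \ref{lem-6} and absorb the extra piece ${\mathfrak{C}}(p)$ by a closure (limit) argument, then read off the components from the remaining irreducible pieces ${\mathfrak{C}}(i)$, $1\leq i\leq p-2$. The only difference is minor: the paper puts each $(0,x)$ with $x\in\ggg_i\setminus\ggg_{i+1}$ into ${\mathfrak{C}}(i)$ via the pairs $(ax,x)$, whereas you use the fact from Lemma \ref{lem-2} that $\ggg_{p-2}$ centralizes $\ggg_1$ to obtain the slightly stronger single inclusion ${\mathfrak{C}}(p)\subseteq{\mathfrak{C}}(p-2)$; both arguments are valid.
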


\begin{proof}
It follows from Lemma \ref{lem-5} and Lemma \ref{lem-6} that each irreducible component of $\CC(\ggg_1)$ is of the form ${\mathfrak{C}}(i)$ for some $i\in\{1,\cdots, p-2, p\}$. We claim that
$${\mathfrak{C}}(p)\subseteq \bigcup\limits_{i=1}^{p-2}{\mathfrak{C}}(i),$$
from which the assertion follows.

Let $x\in\ggg_1$, then either $x=0$ or there exists a unique $i\in\{1,\cdots, p-2\}$ such that $x\in\ggg_i\setminus\ggg_{i+1}$.

\textbf{Case 1:} $x=0$.

In this case, it is obvious that $(0,0)\in {\mathfrak{C}}(j)$ for any $1\leq j\leq p-2$.

\textbf{Case 2:} $x\in\ggg_i\setminus\ggg_{i+1}$.

In this case,
$$(0,x)\in\{(ax, x)\mid a\in k\}=\overline{\{(ax, x)\mid a\in k^{\times}\}}\subseteq \overline{C(i)}={\mathfrak{C}}(i).$$

Therefore,
$${\mathfrak{C}}(p)\subseteq \bigcup\limits_{i=1}^{p-2}{\mathfrak{C}}(i).$$
We are done.
\end{proof}

We are now in the position to present the main result of this section.

\begin{thm}\label{thm-2}
Let $\ggg=W_1$ be the Witt algebra. Then the nilpotent commuting variety $\CC(\ggg_1)$ of the Borel subalgebra $\mathscr{B}^+$ is reducible
and equidimensional. More precisely, $\CC(\ggg_1)=\bigcup\limits_{i=1}^{(p-3)/2}{\mathfrak{C}}(i)$ is the decomposition of $\CC(\ggg_1)$ into
irreducible components. In particular, $\dim \CC(\ggg_1)=p$.
\end{thm}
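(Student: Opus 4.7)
The plan is to mirror the six-step proof of Theorem~\ref{thm-1}, transplanted into the Borel setting. By Proposition~\ref{prop-2}, every irreducible component of $\CC(\ggg_1)$ is of the form $\mathfrak{C}(i)$ for some $i \in \{1,\ldots,p-2\}$, so the task reduces to determining which indices actually produce components and then verifying that these components exhaust $\CC(\ggg_1)$.

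First, I would observe that $GL(2,k)$ acts on $\ggg_1 \times \ggg_1$ by linear combinations of the two entries; since $\ggg_1$ is a subalgebra consisting entirely of $[p]$-nilpotent elements, this action preserves $\CC(\ggg_1)$, and connectedness of $GL(2,k)$ forces every irreducible component to be $GL(2,k)$-invariant, in particular $\sigma$-invariant for $\sigma\colon (x,y) \mapsto (y,x)$. Applying the first projection $\pi_1$ exactly as in Step~2 of Theorem~\ref{thm-1} gives $\pi_1(\mathfrak{C}(i)) = \ggg_i$, so the $\mathfrak{C}(i)$ are pairwise distinct. Next, if $\mathfrak{C}(i)$ is a component, then $\sigma$-invariance combined with Lemma~\ref{lem-2} (and the remark identifying $\mathfrak{z}_{\ggg_1}(x) = \mathfrak{z}_{\ggg}(x)$ for $x \in \ggg_1$) forces $\ggg_{p-1-i} \subseteq \ggg_i$, hence $i \leq (p-1)/2$. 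Dimension comparison via Lemma~\ref{lem-5} then shows that $\mathfrak{C}(1),\ldots,\mathfrak{C}((p-3)/2)$, each of dimension $p$, are genuine components.

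The key remaining step, which I expect to be the main obstacle, is to show $\mathfrak{C}((p-1)/2) \subseteq \bigcup_{i=1}^{(p-3)/2} \mathfrak{C}(i)$. The identification $\mathfrak{C}((p-1)/2) = \ggg_{(p-1)/2} \times \ggg_{(p-1)/2}$ goes through as in Theorem~\ref{thm-1}, and I would split $(x,y)$ in this product into cases by filtration level. When $y \in \ggg_j \setminus \ggg_{j+1}$ with $j > (p-1)/2$, the relation $[\ggg_{p-1-j},\ggg_j] \subseteq \ggg_{p-1} = 0$ yields $\{(u,y) : u \in \ggg_{p-1-j} \setminus \ggg_{p-j}\} \subseteq C(p-1-j)$, and since $x \in \ggg_{(p-1)/2} \subseteq \ggg_{p-1-j}$, passage to closure places $(x,y)$ in $\mathfrak{C}(p-1-j)$ with $p-1-j \leq (p-3)/2$. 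The symmetric case follows by $\sigma$-invariance, and the remaining case $x,y \in \ggg_{(p-1)/2} \setminus \ggg_{(p+1)/2}$ reduces to it by writing $y = ax + z$ with $z \in \ggg_{(p+1)/2}$ and applying the unipotent matrix in $GL(2,k)$ that sends $(x,z)$ to $(x,y)$. The one genuinely new subtlety, compared to Theorem~\ref{thm-1}, is the absence of $\mathfrak{C}(0)$ as a sink for degenerate pairs such as $(v,0)$ or $(0,v)$ with $v \in \ggg_{(p-1)/2}$; this is handled directly, noting that for any $i \in \{1,\ldots,(p-3)/2\}$ the set $\{(u,0) : u \in \ggg_i \setminus \ggg_{i+1}\}$ lies in $C(i)$ and its closure contains all of $\ggg_i \times \{0\} \supseteq \ggg_{(p-1)/2} \times \{0\}$. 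Assembling the cases yields the decomposition into irreducible components, the equidimensionality, and $\dim \CC(\ggg_1) = p$.
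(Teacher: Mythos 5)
Your proposal is correct and follows exactly the route the paper intends: the published proof of Theorem~\ref{thm-2} simply says it is ``similar to that of Theorem~\ref{thm-1}'', and your write-up is precisely that adaptation, with the steps (the $GL(2,k)$-action, the $\pi_1$-argument, the bound $i\leq\frac{p-1}{2}$, the dimension comparison, and the absorption of ${\mathfrak{C}}(\frac{p-1}{2})$) carried over correctly. You also correctly identify and handle the only genuinely new point, namely that ${\mathfrak{C}}(0)$ is unavailable in the Borel setting, so degenerate pairs $(v,0)$ and, via $\sigma$-invariance, $(0,v)$ must be placed in some ${\mathfrak{C}}(i)$ with $1\leq i\leq\frac{p-3}{2}$ directly.
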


\begin{proof}
The proof is similar to that of Theorem \ref{thm-1}.
\end{proof}

\begin{rem}
Let $G=\Aut(\ggg)$ be the automorphism group of $\ggg$. Since $\Lie(G)=\ggg_0={\mathscr{B}}^+$, it follows from \cite{SFB} that the nilpotent commuting variety $\CC(\ggg_1)$ of the Borel subalgebra $\mathscr{B}^+$ is homeomorphic to the spectrum of maximal ideals of the Yoneda algebra $\bigoplus_{i\geq 0} H^{2i}(G_2, k)$ of the second Frobenius kernel $G_2$ of $G$ provided that $p$ is sufficiently large.
\end{rem}

Since $(0,0)\in \bigcap\limits_{i=1}^{(p-3)/2}{\mathfrak{C}}(i)$,
the following result is a direct consequence of Theorem \ref{thm-2}.

\begin{cor}\label{cor-2}
Let $\ggg=W_1$ be the Witt algebra. Then the nilpotent commuting variety $\CC(\ggg_1)$ is not normal.
\end{cor}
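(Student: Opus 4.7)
The plan is to locate a point at which more than one irreducible component of $\CC(\ggg_1)$ meets, and then invoke the elementary fact that a normal algebraic variety is locally irreducible at every point.

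First I would record, by appealing to Theorem \ref{thm-2}, the decomposition $\CC(\ggg_1) = \bigcup_{i=1}^{(p-3)/2} \mathfrak{C}(i)$ into distinct irreducible components; in particular, once $p \geq 7$ we have $(p-3)/2 \geq 2$, so genuinely more than one component is present. Next I would verify that the origin is a common point of all of them. For each $i \in \{1,\ldots,(p-3)/2\}$, pick any $x_0 \in \ggg_i \setminus \ggg_{i+1}$ and observe that $(\lambda x_0, 0) \in C(i)$ for every $\lambda \in k^{\times}$; letting $\lambda \to 0$ shows $(0,0) \in \overline{C(i)} = \mathfrak{C}(i)$. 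Hence $(0,0) \in \bigcap_{i=1}^{(p-3)/2} \mathfrak{C}(i)$, which is precisely the containment flagged in the sentence just before the corollary.

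To finish, I would invoke the standard local criterion: at every point of a normal variety the local ring is an integrally closed domain, so it has a unique minimal prime, and therefore the point lies on a single irreducible component. Contrapositively, at a point through which two distinct irreducible components pass, the local ring has at least two minimal primes, fails to be a domain, and in particular fails to be integrally closed. Applied at the point $(0,0)$, which lies simultaneously on the $(p-3)/2 \geq 2$ distinct components $\mathfrak{C}(i)$, this immediately yields non-normality of $\CC(\ggg_1)$ and completes the proof.

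There is no substantive obstacle: the corollary is a direct formal consequence of Theorem \ref{thm-2} combined with the local-irreducibility property of normal varieties. The only bookkeeping points are to verify the containment $(0,0)\in\mathfrak{C}(i)$ via the one-line scaling argument above, and to read the blanket hypothesis $p > 3$ as $p \geq 7$ so that Theorem \ref{thm-2} indeed produces at least two components.
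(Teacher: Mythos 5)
Your proof is correct and is essentially the paper's own argument: the paper simply observes that $(0,0)\in\bigcap_{i=1}^{(p-3)/2}\mathfrak{C}(i)$ and calls the corollary a direct consequence of Theorem \ref{thm-2}, leaving implicit exactly the facts you supply (the scaling argument putting the origin on every component, and local irreducibility of normal varieties). Your caveat that one must take $p\geq 7$ so that $(p-3)/2\geq 2$ is a genuine point the paper glosses over, since for $p=5$ the variety $\CC(\ggg_1)=\mathfrak{C}(1)$ is irreducible and this argument yields nothing.
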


\subsection*{Acknowledgements}
This work was done during the visit of the authors to the department of mathematics in the University
of Kiel in 2012-2013. The authors would like to express their sincere gratitude to professor Rolf Farnsteiner for his invitation and hospitality. The authors also thank the department of mathematics for providing an excellent atmosphere.


\begin{thebibliography}{99}

\bibitem{Ch} H.-J. Chang, {\em Uber Wittsche Lie-Ringe}. Abh. Math. Sem. Univ. Hamburg \textbf{14}, 151-184, 1941.

\bibitem{GR} S. M. Goodwin and G. R\"ohrle, {\em On commuting varieties of nilradicals of Borel subalgebras of reductive Lie algebras}.
arXiv: 1209.1289v1 [math. RT], 2012.

\bibitem{LN} Z. Lin and D. Nakano, {\em Algebraic group actions in the cohomology theory of Lie algebras of Cartan type}.
J. Algebra \textbf{179}, 852-888, 1996.

\bibitem{Pr-1} A. A. Premet, {\em The theorem on restriction of invariants and nilpotent elements in $W_n$}.
Math. USSR. Sbornik \textbf{73}, No. 1, 135-159, 1992.

\bibitem{Pr-2} A. A. Premet, {\em Nilpotent commuting varieties of reductive Lie algebras}.
Invent. Math. \textbf{154}, 653-683, 2003.

\bibitem{Re} R. Ree, {\em On generalized Witt algebras.} Trans. Amer. Math. Soc. \textbf{83}, 510-546, 1956.

\bibitem{SF} H. Strade and R. Farnsteiner, {\em Modular Lie Algebras and their Representations}. Pure and Applied Mathematics \textbf{116}. Marcel Dekker,
Inc. New York, 1988.

\bibitem{SFB} A. Suslin, E. Friedlander and C. Bendel, {\em Infinitesimal 1-parameter subgroups and cohomology}. J. Amer. Math. Soc. \textbf{10},
693-728, 1997.

\bibitem{Wi} R. Wilson, {\em  Antomorphisms of graded Lie algebra of Cartan type}. Comm. Algebra \textbf{3}, No.7, 591-613, 1975.

\bibitem{YC} Y. F. Yao and H. Chang, {\em Borel subalgebras of the Witt algebra $W_1$}. arXiv: 1212.4349v1 [math. RT], 2012.

\bibitem{YS} Y. F. Yao and B. Shu, {\em Nilpotent Orbits in the Witt Algebra $W_1$}. Comm. Algebra \textbf{39}, No. 9, 3232-3241, 2011.
\end{thebibliography}
\end{document}